\title{On the number of edges in a graph with no $(k+1)$-connected subgraphs}
\date{}
\author{
Anton~Bernshteyn~\thanks{Department of Mathematics, University of Illinois at Urbana--Champaign, IL, USA, bernsht2@illinois.edu. Research of this author is supported by the Illinois Distinguished Fellowship.}
\and Alexandr Kostochka \thanks{Department of Mathematics, University of Illinois at Urbana--Champaign, IL, USA and
Sobolev Institute of Mathematics, Novosibirsk 630090, Russia, kostochk@math.uiuc.edu. Research of this author is supported in part by NSF grant
 DMS-1266016 and by grant 15-01-05867 of the Russian Foundation for Basic Research.}}
\newtheorem{theo}{Theorem}
\newtheorem{lemma}[theo]{Lemma}
\newtheorem{conj}[theo]{Conjecture}
\theoremstyle{definition}
\theoremstyle{remark}
\newenvironment{lemmabis}[1]
  {%
   \addtocounter{theo}{-1}%
   \begin{lemma}}
  {\end{lemma}}
	\newenvironment{conjbis}[1]
  {%
   \addtocounter{theo}{-1}%
   \begin{conj}}
  {\end{conj}}
\begin{document}
	\maketitle
	
\begin{abstract}
Mader proved that for $k\geq 2$ and $n\geq 2k$, every $n$-vertex graph with no $(k+1)$-connected subgraphs has at most
$(1+\frac{1}{\sqrt{2}})k(n-k)$ edges.
He also conjectured that for $n$ large with respect to $k$, every such graph has at most $\frac{3}{2}\left(k - \frac{1}{3}\right)(n-k)$
edges. Yuster improved Mader's upper bound to $\frac{193}{120}k(n-k)$ for $n\geq\frac{9k}{4}$. In this note, we make the next step towards Mader's
Conjecture: we improve Yuster's bound to $\frac{19}{12}k(n-k)$ for $n\geq\frac{5k}{2}$.
\\
\\
 {\small{\em Mathematics Subject Classification}: 05C35, 05C40}\\
 {\small{\em Key words and phrases}:  Average degree, connectivity, $k$-connected subgraphs.}
\end{abstract}

	\section{Introduction}
	
	All graphs considered here are finite, undirected, and simple. For a graph $G$, $V(G)$ and $E(G)$ denote 
its vertex set and edge set respectively. If $U \subseteq V(G)$, then $G[U]$ denotes the induced subgraph of $G$ whose vertex set is $U$, and $G - U \coloneqq G[V(G) \setminus U]$. 
For $v \in V(G)$, $N(v) \coloneqq \{u \in V(G)\,:\,uv \in E(G)\}$ denotes the neighborhood of $v$ in $G$.
	
	Let $k \in \mathbb{N}$. Recall that a graph $G$ is \emph{$(k+1)$-connected} if, for every set $S \subset V(G)$ of size $k$, 
the graph $G[V(G)\setminus S]$ is connected and contains at least two vertices (so $|V(G)| \geq k+2$). 
Mader \cite{Mader2} posed the following  question:
\begin{displayquote}
	What is the maximum possible number of edges in an $n$-vertex graph that does not contain a $(k+1)$-connected subgraph?
\end{displayquote}
 It is easy to see that for $k = 1$ the answer is $n-1$: every tree on $n$ vertices contains $n-1$ edges and no $2$-connected subgraphs, 
whereas every graph on $n$ vertices with at least $n$ edges  contains a cycle, and cycles are $2$-connected.
 Thus for the rest of the note we will assume $k \geq 2$.
	
	The following construction due to Mader \cite{Mader1} gives an example of a graph with no $(k+1)$-connected subgraphs and a large number of edges. 
Fix $k$ and $n$, and suppose that $n = kq + r$, where $1 \leq r \leq k$. The graph $G_{n, k}$ has vertex set $\bigcup_{i = 0}^q V_i$, 
where the sets $V_0$, \ldots, $V_q$ are pairwise disjoint and satisfy the following contitions.
	\begin{enumerate}
		\item $|V_0| = \ldots = |V_{q-1}| = k$, while $|V_q| = r$.
		\item $V_0$ is an independent set in $G_{n, k}$.
		\item For $1 \leq i \leq q$, $V_i$ is a clique in $G_{n,k}$.
		\item Every vertex in $V_0$ is adjacent to every vertex in $\bigcup_{i=1}^q V_i$.
		\item  $G_{n,k}$ has no other edges.
	\end{enumerate}
	Note that $V_0$ is a separating set of size $k$ and every component of $G_{n,k}-V_0$ has at most $k$ vertices. 
	It follows that
 $G_{n,k}$ has no $(k+1)$-connected subgraphs. A direct calculation shows that $G_{n, k}$ has at most 
$\frac{3}{2}\left(k - \frac{1}{3}\right)(n-k)$ edges, where the equality holds if $n$ is a multiple of $k$. 
Mader \cite{Mader1} conjectured that this example is, in fact, best possible.
	
	\begin{conj}[Mader \cite{Mader1}]\label{conj:Mader}
		Let $k \geq 2$. Then for $n$ sufficiently large, the number of edges in an $n$-vertex graph without a $(k+1)$-connected subgraph cannot exceed
 $\frac{3}{2}\left(k - \frac{1}{3}\right)(n-k)$.
	\end{conj}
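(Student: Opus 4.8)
The plan is to attack Conjecture~\ref{conj:Mader} by induction on $n$ with $k$ fixed, in the spirit of the partial results but carrying the case analysis all the way to the sharp constant. Put $\beta\coloneqq\tfrac{3k-1}{2}$, so that the conjectured bound takes the form $\beta(n-k)$. Suppose $G$ is a minimal counterexample: an $n$-vertex graph with no $(k+1)$-connected subgraph, $e(G)>\beta(n-k)$, and $n$ minimum subject to $n\ge n_0(k)$ for a suitable threshold $n_0(k)$. The first reductions are routine: if $\delta(G)\le\lfloor\beta\rfloor$, deleting a minimum-degree vertex contradicts minimality, so we may assume $\delta(G)\ge\lfloor\beta\rfloor+1\ge\tfrac{3k}{2}$; and since $G$ is not $(k+1)$-connected and $n\ge k+2$, there is a separation $(A,B)$ with $S\coloneqq A\cap B$, $t\coloneqq|S|\le k$, and $A\setminus S,B\setminus S$ both nonempty, which I would pick to be extreme in a sense fixed below.

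A separation offers two ``cheap'' ways to finish, and the whole difficulty is showing a counterexample escapes both. \emph{(i) Peeling a small side.} If some side, say $A$, satisfies $|A\setminus S|\le 3k-2t$, then every edge of $G$ meeting $A\setminus S$ lies in $G[A]$, so there are at most $\binom{|A\setminus S|}{2}+|A\setminus S|\,t\le\beta\,|A\setminus S|$ of them; deleting $A\setminus S$ and applying the inductive hypothesis to what remains gives $e(G)\le\beta(n-k)$, a contradiction. \emph{(ii) Inducting on a large side.} If both $|A|\ge n_0(k)$ and $|B|\ge n_0(k)$, then $e(G)=e(G[A])+e(G[B])-e(G[S])\le\beta(|A|-k)+\beta(|B|-k)=\beta(n+t-2k)\le\beta(n-k)$, again a contradiction. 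Consequently, in a minimal counterexample every $\le k$-separation has at least one \emph{medium} side: a side $A$ with $|A\setminus S|>3k-2t$ (so $|A|>2k$) and $|A|<n_0(k)$. Choosing a separation whose medium side $A$ has $|A|$ smallest possible makes $A\setminus S$ a minimal fragment ($G[A\setminus S]$ connected, every vertex of $S$ with a neighbour in $A\setminus S$, no proper sub-fragment), so that $G[A]$ is ``almost $(k+1)$-connected'' yet contains no $(k+1)$-connected subgraph.

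What remains is a \emph{stability} statement: show that such a minimal medium $G[A]$ must look like a block $\bigcup_{i\ge1}V_i$ of the extremal graph $G_{n,k}$ — namely $A\setminus S$ a disjoint union of near-cliques of size at most $k$, and $S$ nearly independent and nearly complete to $A\setminus S$. The leverage comes from the absence of a $(k+1)$-connected subgraph: for instance, $k+1$ pairwise adjacent vertices of $A$ can have no common neighbour in $A$ (that would be a $(k+1)$-connected $K_{k+2}$), and related forbidden-subconfiguration arguments simultaneously limit the number of edges inside $A\setminus S$ and the number of vertices of $S$ complete to $A\setminus S$. That is exactly the slack needed to push step~(i) through for the borderline medium pieces, shaving the global count down to $\beta(n-k)$; getting the lower-order term (the $-\tfrac13$) and the ``$n$ sufficiently large'' hypothesis exactly right would then amount to balancing these per-piece savings across the whole graph.

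The hard part — and the reason Conjecture~\ref{conj:Mader} is still open — is precisely this last stability step for the borderline configurations: a dense set of about $k+1$ vertices sitting behind a separator of size about $k$ overshoots the target by only $\Theta(k)$ edges, so nothing may be wasted, and the excess must be recouped by a tight argument that jointly controls (a) how many edges are missing inside the piece, (b) how many separator vertices are complete to it, and (c) how the separators of several medium pieces overlap. The successive improvements to $\tfrac{193}{120}k$ and $\tfrac{19}{12}k$ correspond to progressively sharper but still lossy treatments of exactly these pieces; turning the accounting from local into a genuinely global, exactly-tight one is what remains to be done.
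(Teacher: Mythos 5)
This statement is Mader's Conjecture, which the paper does not prove and which remains open; the paper only establishes the weaker bound with coefficient $\frac{19}{12}$ in place of $\frac{3}{2}$. Your proposal is therefore not comparable to a proof in the paper, and, more importantly, it is not a proof at all: you yourself flag that the decisive ``stability'' step --- showing that a minimal medium side $G[A]$ must structurally resemble a block of $G_{n,k}$, and that the resulting per-piece savings can be balanced globally to recover the exact constant $\frac{3}{2}\left(k-\frac{1}{3}\right)$ --- is left entirely unproven. That step is not a technical afterthought; it is the whole content of the conjecture, and no argument, even in outline, is offered for it. A sketch whose key lemma is ``show the extremal structure emerges'' cannot be accepted as a proof.

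Beyond the admitted gap, the concrete reductions you do spell out are themselves incomplete at exactly the point where the known partial results have to work hardest. In step (i), after deleting $A\setminus S$ the remaining graph has $n-|A\setminus S|$ vertices, and nothing guarantees this stays above your threshold $n_0(k)$; for $n'$ below roughly $\frac{5}{2}k$ the conjectured linear bound is simply false (the extremal count is quadratic in $n'-k$, as quantified by Matula's bound in Lemma~\ref{lemma:matula}), so the induction cannot be invoked there. Handling precisely this regime --- one component of $G-S$ of size below $\frac{3}{2}k$, where the induction hypothesis is unavailable --- is what forces the paper's partial result to combine Matula's lemma with the refined separator analysis of Lemmas~\ref{lemma:core}--\ref{lemma:greater}, and it is also where the loss from $\frac{3}{2}$ to $\frac{19}{12}$ occurs. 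Your proposal acknowledges these borderline pieces but supplies no mechanism for them, so both the base/threshold issue and the stability step remain genuine gaps.
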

	
	Mader himself proved Conjecture \ref{conj:Mader} for $k\leq 6$. Moreover, he showed that for all $k$, the weaker version of the conjecture,
 where the coefficient $\frac{3}{2}$ is replaced by $1+\frac{1}{\sqrt{2}}$, holds. 
Yuster~\cite{Yuster} improved this result by showing that the coefficient can be taken to be $\frac{193}{120}$.

	\begin{theo}[Yuster~\cite{Yuster}]\label{theo:main}
		Let $k \geq 2$ and $n\geq \frac{9k}{4}$. Then every $n$-vertex graph $G$ with  $|E(G)| > \frac{193}{120}k(n-k)$ contains a $(k+1)$-connected subgraph. 
	\end{theo}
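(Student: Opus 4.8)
The plan is to prove Theorem~\ref{theo:main} by induction on $n$ (with $k$ fixed); write $c=\frac{193}{120}$. Assume the statement for all smaller vertex counts, let $G$ be an $n$-vertex graph with $|E(G)| > ck(n-k)$, and suppose towards a contradiction that $G$ has no $(k+1)$-connected subgraph. I would start with two routine reductions. First, we may assume the minimum degree satisfies $\delta(G) \geq k+1$: if $|N(v)| \leq k$ for some $v$, then $G-v$ still has no $(k+1)$-connected subgraph, so (after clearing the small base cases) $|E(G)| \leq |E(G-v)| + k \leq ck(n-1-k) + k = ck(n-k) - (c-1)k < ck(n-k)$, since $c>1$. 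Second, we may assume $G$ is connected, since otherwise the bound is additive over the components (each component has at least $k+2$ vertices by the first reduction, so no degenerate component appears). As $G$ is a subgraph of itself and $n\geq \frac{9k}{4} > k+1$, the absence of a $(k+1)$-connected subgraph gives $\kappa(G)\leq k$; put $s=\kappa(G)$.

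Next, fix a minimum separator $S$ with $|S|=s$ and let $C_1,\dots,C_t$ ($t\geq 2$) be the vertex sets of the components of $G-S$. Minimality of $S$ forces every vertex of $S$ to have a neighbour in every $C_i$, while $\delta(G)\geq k+1$ forces $|C_i|\geq k+2-s$; hence each $G[C_i\cup S]$ has at least $k+2$ vertices, is connected, and inherits from $G$ the property of having no $(k+1)$-connected subgraph. Counting the edges inside $S$ once for each piece,
\[
|E(G)| \;=\; \sum_{i=1}^{t} |E(G[C_i\cup S])| \;-\; (t-1)\,|E(G[S])|.
\]
If every piece satisfies $|C_i|+s\geq \frac{9k}{4}$, the induction hypothesis gives $|E(G[C_i\cup S])|\leq ck(|C_i|+s-k)$ for each $i$, and summing (using $\sum_i|C_i| = n-s$, $t\geq 2$, $s\leq k$, and $|E(G[S])|\geq 0$) shows the right-hand side is at most $ck(n-k)$ --- a contradiction. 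So the whole theorem reduces to controlling the \emph{small} pieces, those $C$ for which $G[C\cup S]$ has fewer than $\frac{9k}{4}$ vertices.

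I expect this to be the main obstacle. For such a piece the induction hypothesis is not available, and the naive substitute --- that $G[C\cup S]$ contains no $K_{k+2}$, hence has few edges by Tur\'an's theorem --- is far too weak, since even the Tur\'an bound for forbidden $K_{k+2}$ exceeds $ck(m-k)$ once $m$ is close to $\frac{9k}{4}$. The resolution has to exploit the full strength of ``no $(k+1)$-connected subgraph'', together with $\delta(G)\geq k+1$ and the fact that $C$ is a component adjacent to a minimum separator: concretely, I would iterate the separator argument inside the small piece and appeal to known edge bounds for graphs that are $k$-connected but not $(k+1)$-connected, aiming to bound the edges incident to a small component $C$ by about $ck|C|$. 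The accounting of the $|E(G[S])|$ terms is delicate --- when $G[S]$ is dense the crude estimate above loses too much, and one must recover the slack, for instance by choosing $S$ so that the smaller side is as small as possible, or by strengthening the inductive statement --- and it is precisely the trade-off between the size of a small piece, the number of edges inside $S$, and the target coefficient that forces the coefficient to be as large as $\frac{193}{120}$ and the threshold as large as $\frac{9k}{4}$; sharpening this small-graph analysis is exactly what one would need to improve them to $\frac{19}{12}$ and $\frac{5k}{2}$. The base cases (values of $n$ just above $\frac{9k}{4}$) are themselves instances of this small-graph analysis and would be checked directly.
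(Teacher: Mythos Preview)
Your proposal is an outline, not a proof: you correctly set up the induction and the separator decomposition, but you explicitly leave the decisive step --- bounding the edges incident to a \emph{small} component --- unresolved, offering only the intention to ``iterate the separator argument\ldots and appeal to known edge bounds''. That step is where the entire content of the theorem lives; the rest is routine, and the particular coefficient ($\tfrac{193}{120}$ for Yuster, $\tfrac{19}{12}$ in this paper, $1+\tfrac{1}{\sqrt2}$ for Mader) is determined solely by how well one handles it. Tur\'an's theorem, as you already note, is useless here, and ``bounds for graphs that are $k$-connected but not $(k+1)$-connected'' is not a substitute for an argument.

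The paper does not prove Yuster's theorem (it is cited), but its proof of the improved $\tfrac{19}{12}$ bound shows what is actually required, and the same machinery drives Yuster's argument. The missing ingredient is Matula's lemma (Lemma~\ref{lemma:matula}): any graph on $n>k$ vertices with more than $\binom{n}{2}-\tfrac13\bigl((n-k)^2-1\bigr)$ edges contains a $(k+1)$-connected subgraph. This quadratic bound is what makes the small-piece case tractable. One takes a separator $S$ of size \emph{exactly} $k$ (not a minimum one), lets $A$ be the smallest component of $G-S$ and $B=V(G)\setminus(S\cup A)$, and applies Matula to $G[S\cup A]$, or to $G[A\cup(S\setminus S')]$ for a carefully chosen $S'\subseteq S$, combined with Matula or induction on $G[S\cup B]$; the work is in choosing $S'$ (Lemmas~\ref{lemma:core}--\ref{lemma:greater}). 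Your minimum-degree reduction is a detour the paper never uses: it does not help with the small piece, and it forces a separate treatment of the base cases near the threshold, whereas Matula's lemma disposes of those directly (in the paper's version it shows any minimal counterexample has $n>3k$). The $|E(G[S])|$ bookkeeping you worry about never arises either, since with $|S|=k$ and just the two pieces $A$, $B$ the inductive inequality closes exactly.
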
	

 Here we improve Yuster's bound, obtaining the value $\frac{19}{12}$ for the coefficient.

	It turns out that for this problem, computations work out nicer if we ``normalize'' vertex and edge counts by assigning a weight $\frac{1}{k}$ 
to each vertex and a weight $\frac{1}{k^2}$ to each edge in a graph. Using this terminology, we can restate Conjecture \ref{conj:Mader} in the following way.
	
	\begin{conjbis}{conj:Mader}\label{conj:Mader1}
		Let $k \geq 2$. Then for $\gamma$ sufficiently large, every graph $G$ with $\frac{1}{k}|V(G)| = \gamma$ and $\frac{1}{k^2}|E(G)| > \frac{3}{2}(\gamma-1)$ contains a $(k+1)$-connected subgraph.
	\end{conjbis}

Our main result in these terms is as follows.

	\begin{theo}\label{theo:main}
		Let $k \geq 2$. Then every graph $G$ with $\frac{1}{k}|V(G)| = \gamma \geq \frac{5}{2}$ and $\frac{1}{k^2}|E(G)| > \frac{19}{12}(\gamma-1)$ contains a $(k+1)$-connected subgraph. 
	\end{theo}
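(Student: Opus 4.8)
The plan is to deduce the theorem from the stronger claim that every graph $G$ with no $(k+1)$-connected subgraph satisfies $|E(G)| \leq f(|V(G)|)$, where $f$ is a fixed function with $f(n) = \frac{19}{12}k(n-k)$ for $n \geq \frac{5k}{2}$, $f(n) = \binom{n}{2}$ for $k \leq n \leq k+1$, and $f$ somewhat larger than the linear formula on the window $k+1 < n < \frac{5k}{2}$; the theorem is then the restriction $n \geq \frac{5k}{2}$. The proof of this claim is by induction on $n \coloneqq |V(G)|$, the cases $n \leq k+1$ being trivial. For $n \geq k+2$ the graph $G$ is not itself $(k+1)$-connected (else it would be a $(k+1)$-connected subgraph of itself), so it has a vertex cut $S$ with $s \coloneqq |S| \leq k$.

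Two moves drive the induction. First, if $G$ has a vertex $v$ with $\deg(v) \leq f(n) - f(n-1)$, delete it: $G - v$ has no $(k+1)$-connected subgraph and, by induction, at most $f(n-1)$ edges, so $|E(G)| \leq f(n-1) + \deg(v) \leq f(n)$. Second, using $S$, let $A$ be the vertex set of a smallest component of $G-S$ and $B$ the union of the others, so $a \coloneqq |A| \leq \frac{1}{2}(n-s)$; every $(k+1)$-connected subgraph of $G$ lies inside $G[A\cup S]$ or inside $G[B\cup S]$, since a subgraph meeting both $A$ and $B$ would have a cut of size $\leq |S| \leq k$, so neither $G_A \coloneqq G[A\cup S]$ (on $a+s$ vertices) nor $G_B \coloneqq G[B\cup S]$ (on $n-a$ vertices) contains one. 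By induction, $|E(G)| = |E(G_A)| + |E(G_B)| - |E(G[S])| \leq f(a+s) + f(n-a)$, so it suffices to show $f(a+s) + f(n-a) \leq f(n)$. When $a+s$ and $n-a$ are both $\geq \frac{5k}{2}$ this is immediate: by linearity of $f$ there and $s \leq k$, $f(a+s)+f(n-a) = \frac{19}{12}k(n + s - 2k) \leq \frac{19}{12}k(n-k) = f(n)$.

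The difficulty is the rest, namely the behaviour of $f$ below $\frac{5k}{2}$, since one or both of $a+s$, $n-a$ may be smaller than $\frac{5k}{2}$. If $G$ has no vertex of degree $\leq f(n)-f(n-1)$, then $\delta(G) > f(n)-f(n-1)$, and since each vertex of $A$ has its neighbours in $A\cup S$ we get $a+s-1 \geq \delta(G) > f(n)-f(n-1)$; for $n$ in the relevant window this lower bound on $a$, together with $a \leq \frac{1}{2}(n-s)$, constrains $a$ enough to make the inequality $f(a+s)+f(n-a)\leq f(n)$ plausible to verify by hand. What $f$ should be below $\frac{5k}{2}$ is dictated by the dense extremal examples there — gluings of copies of $K_{k+1}$ along at most $k$ common vertices, their iterates, and hybrids with Mader's construction — and one wants the smallest function dominating all of these that is also ``split-subadditive'' in the above sense and meets $\frac{19}{12}k(n-k)$ at $n = \lceil \frac{5k}{2}\rceil$.

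I expect this last point — choosing $f$ on $[k,\frac{5k}{2}]$ and checking $f(a+s)+f(n-a)\leq f(n)$ for every admissible pair $(a,s)$ — to be the main obstacle, and the reason the hypothesis is exactly $\gamma \geq \frac{5}{2}$: that should be the threshold beyond which the clique-gluing examples are no longer extremal, so $f$ becomes linear, and below which they force $f$ up. It is also where the coefficient improves from Yuster's $\frac{193}{120}$ to $\frac{19}{12}$. Organising the case analysis on $s$ and $a$ in this window, and getting all the small-graph inequalities to close with these particular constants, is where I expect the real work of the proof to be.
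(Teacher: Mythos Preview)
Your proposal is a reasonable outline of the Mader--Yuster framework, but it is a plan rather than a proof, and the plan as stated will not close at $\tfrac{19}{12}$. The crux is your final paragraph: you correctly identify that everything hinges on the small-side estimate when $|A\cup S|<\tfrac{5k}{2}$, but you propose to handle it by choosing a clever $f$ on $[k,\tfrac{5k}{2}]$ and verifying split-subadditivity $f(a+s)+f(n-a)\le f(n)$. In this paper's setup one always has $s=k$, and the natural choice of $f$ below $\tfrac{5k}{2}$ is Matula's bound; the paper checks explicitly (see the proof of Lemma~\ref{lemma:nottoosmall} and the discussion following it) that this suffices when $\alpha\le 1$ but \emph{fails} in the critical window $1<\alpha<\tfrac{3}{2}$. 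No piecewise choice of $f$ alone will rescue this, because the bound $|E(G_A)|\le f(a+k)$ is simply too weak there: it ignores the interaction between $A$ and $S$.

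What the paper does instead is the new ingredient your plan is missing. Rather than bound all of $|E(G[A\cup S])|$ at once, one first proves a structural fact (Lemma~\ref{lemma:core}): at least $k/3$ vertices of $S$ have at most $\tfrac{k}{2}(\alpha+1)$ neighbours in $A$. Call this set $S_1$. One then chooses $S'\subseteq S$ containing a large piece of $S_1$, bounds the $A$--$S'$ edges directly using the degree constraint from $S_1$, and applies Matula only to $G[A\cup(S\setminus S')]$, which has fewer vertices. This two-step estimate on the edges incident to $A$ is strictly sharper than any bound of the form $f(|A\cup S|)$ and is exactly what pushes the coefficient from $\tfrac{193}{120}$ down to $\tfrac{19}{12}$. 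The proof of Lemma~\ref{lemma:core} itself is nontrivial: it requires finding a second separating set inside $G[A\cup(S\setminus S_1)]$ and a further application of the induction hypothesis. Your low-degree-vertex deletion move, by contrast, plays no role in the paper's argument.
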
	

We follow the ideas of Mader and Yuster: Use induction on the number of vertices for graphs with at least $\frac{5}{2}k$ vertices.
The hardest part is to prove the case when after deleting a separating set of size $k$, exactly one of the components of the
remaining graph has fewer than $\frac{3}{2}k$ vertices, since the induction assuption does not hold for $n<\frac{5}{2}k$. 
New ideas in the proof are in Lemmas~\ref{lemma:core},~\ref{lemma:smaller}, and~\ref{lemma:greater} below.
	
	\section{Proof of Theorem \ref{theo:main}}
	
We want to derive a linear in $(n-k)$ bound on the number of edges in a graph that does not contain $(k+1)$-connected subgraphs. 
But the bound becomes linear only for graphs with  large number of vertices; while for small graphs the dependency is quadratic in $n-k$. 
The main difficulties we encounter are around the transition between the quadratic and linear regimes. 
To deal with small $n$, we use the following lemma due to Matula \cite{Matula}, whose bound
is asymptotically exact for $n<2k$.
	
	\begin{lemma}[Matula \cite{Matula}]\label{lemma:matula}
		Let $k \geq 2$. Then every graph $G$ with $|V(G)| = n \geq k+1$ and $|E(G)| > {n \choose 2} - \frac{1}{3}((n-k)^2 - 1)$ contains a $(k+1)$-connected subgraph.
	\end{lemma}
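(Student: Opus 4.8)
The plan is to prove the contrapositive by induction on $n$: if $G$ has $n\ge k+1$ vertices and contains no $(k+1)$-connected subgraph, then its number of \emph{non-edges} $\overline{e}(G):=\binom{n}{2}-|E(G)|$ satisfies $\overline{e}(G)\ge\frac13\big((n-k)^2-1\big)$. The base case $n=k+1$ is immediate, since the right-hand side equals $0$.

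For the inductive step I would assume $n\ge k+2$. As $G$ is not itself $(k+1)$-connected and $|V(G-S)|=n-k\ge 2$ for every $k$-set $S$, there is a set $S$ of exactly $k$ vertices with $G-S$ disconnected. Let $A$ be the vertex set of a smallest component of $G-S$, and set $a=|A|$, $B=V(G)\setminus(S\cup A)$, $b=|B|$, so that $a+b=n-k$ and $1\le a\le b$ (the set $B$ contains an entire component of $G-S$, which is at least as large as $A$). Every non-edge of $G$ either lies inside $G[S\cup B]$ or has an endpoint in $A$, and the second class includes the $ab$ pairs between $A$ and $B$, none of which is an edge of $G$; hence $\overline{e}(G)\ge\overline{e}(G[S\cup B])+ab$. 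Now $G[S\cup B]$ has $k+b$ vertices with $k+1\le k+b<n$ and still has no $(k+1)$-connected subgraph, so the induction hypothesis applies and gives $\overline{e}(G[S\cup B])\ge\frac13(b^2-1)$; therefore
\[
\overline{e}(G)\;\ge\;\tfrac13(b^2-1)+ab\;=\;\tfrac13\big((a+b)^2-1\big)+\tfrac{a}{3}(b-a)\;\ge\;\tfrac13\big((n-k)^2-1\big),
\]
using $b\ge a$. This closes the induction, and the lemma follows.

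The step I expect to be the genuine obstacle is arranging the decomposition so that no error term of size $\Theta(k^2)$ appears. The most natural attempt is to cut $G$ along $S$ into the two overlapping pieces $G[S\cup A]$ and $G[S\cup B]$ and apply induction to both; but this yields $\overline{e}(G)=\overline{e}(G[S\cup A])+\overline{e}(G[S\cup B])-\overline{e}(G[S])+ab$, and the subtracted term $\overline{e}(G[S])$ can be as large as $\binom{k}{2}$, which the gain $ab$ cannot compensate when both sides of the cut are small. The resolution above avoids this by being asymmetric: keep all of $S$ on one side, peel off only a smallest component $A$, and bound the non-edges meeting $A$ crudely by $ab$ — after which the whole inequality reduces to the trivial $b\ge a$. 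The only remaining points needing care are the routine verifications that a separator of size exactly $k$ exists (this is where $n\ge k+2$ is used) and that $G[S\cup B]$ has at least $k+1$ vertices, so that the inductive hypothesis is genuinely available.
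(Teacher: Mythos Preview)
Your proof is correct. The non-edge bookkeeping, the choice to peel off only a smallest component while keeping all of $S$ on the $B$-side, and the final inequality $\tfrac13(b^2-1)+ab=\tfrac13((a+b)^2-1)+\tfrac{a}{3}(b-a)\ge\tfrac13((n-k)^2-1)$ all go through as written; the two boundary checks you flag (a separator of size exactly $k$ exists once $n\ge k+2$, and $|S\cup B|\ge k+1$) are indeed routine.

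There is nothing to compare against in this paper, however: Lemma~\ref{lemma:matula} is quoted from Matula~\cite{Matula} and used as a black box, with only the trivial derivation of the normalized form (Lemma~\ref{lemma:matula}$'$) supplied. What you have written is essentially Matula's own inductive argument, so your proposal is a faithful reconstruction of the cited result rather than an alternative to anything the present authors do.
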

	
	We will use the following ``normalized'' version of this lemma.
	
	\begin{lemmabis}{lemma:matula}\label{lemma:matula1}
		Let $k \geq 2$. Then every graph $G$ with $\frac{1}{k}|V(G)| = \gamma > 1$, and 
\begin{equation}\label{a12}
\frac{1}{k^2}|E(G)| > \frac{1}{6}\left(\gamma^2+4\gamma-2\right)
\end{equation}
 contains a $(k+1)$-connected subgraph.
	\end{lemmabis}
	
	\begin{proof}
		Indeed,~\eqref{a12} yields
		\begin{align*}
			|E(G)| &\,>\, \frac{k^2}{6}\left(\gamma^2+4\gamma-2\right) \\
			       &\,=\, {\gamma k \choose 2} - \frac{1}{3}((\gamma k-k)^2 - 1) +\frac{\gamma k}{2} - \frac{1}{3} \\
						 &\,>\, {\gamma k \choose 2} - \frac{1}{3}((\gamma k-k)^2 - 1),
		\end{align*}
		and we are done by original Matula's lemma.
	\end{proof}
	
	From now on, fix a graph $G$ with $\frac{1}{k}|V(G)| = \gamma \geq \frac{5}{2}$ and $\frac{1}{k^2}|E(G)| > \frac{19}{12}(\gamma-1)$, 
and suppose for contradiction that $G$ does not contain a $(k+1)$-connected subgraph. 
Choose $G$ to have the least possible number of vertices (so we can apply induction hypothesis for subgraphs of $G$). 
Since $G$ itself is not $(k+1)$-connected, it contains a separating set $S \subset V(G)$ of size $k$. Let $A\subset V(G) \setminus S$ 
be such that $G[A]$ is a smallest connected component of $G - S$, and let $B \coloneqq V(G) \setminus (S \cup A)$. 
Let $\alpha \coloneqq \frac{1}{k}|A|$ and $\beta \coloneqq \frac{1}{k}|B|$.
	
We start by showing that the graph $G$ cannot be too small, using Matula's Lemma.
	
	\begin{lemma}
		$\gamma > 3$.
	\end{lemma}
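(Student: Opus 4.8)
The plan is to derive a contradiction from the assumption that $\gamma \le 3$ by invoking the normalized Matula bound, Lemma~\ref{lemma:matula1}. We are given $\frac{1}{k^2}|E(G)| > \frac{19}{12}(\gamma-1)$ and, since $G$ has no $(k+1)$-connected subgraph, Lemma~\ref{lemma:matula1} forces
\[
\frac{19}{12}(\gamma-1) \,<\, \frac{1}{k^2}|E(G)| \,\le\, \frac{1}{6}\left(\gamma^2 + 4\gamma - 2\right).
\]
Wait — that inequality is in the wrong direction to be immediately contradictory, so the real content is a comparison of the two upper-bound expressions: if $\gamma$ is small, the Matula bound $\frac{1}{6}(\gamma^2+4\gamma-2)$ is \emph{smaller} than the lower bound $\frac{19}{12}(\gamma-1)$ coming from our edge hypothesis, which is impossible. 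Concretely, I would show that for $\frac{5}{2} \le \gamma \le 3$ we have $\frac{19}{12}(\gamma-1) \ge \frac{1}{6}(\gamma^2+4\gamma-2)$, i.e.\ $\frac{19}{2}(\gamma-1) \ge \gamma^2 + 4\gamma - 2$, i.e.\ $\gamma^2 - \frac{11}{2}\gamma + \frac{15}{2} \le 0$. The quadratic $\gamma^2 - \frac{11}{2}\gamma + \frac{15}{2}$ has roots at $\gamma = 2$ and $\gamma = \frac{15}{4} = 3.75$, so it is nonpositive precisely on $[2, 3.75] \supseteq [\frac{5}{2}, 3]$. Hence on this range the hypothesis $\frac{1}{k^2}|E(G)| > \frac{19}{12}(\gamma-1)$ already implies $\frac{1}{k^2}|E(G)| > \frac{1}{6}(\gamma^2+4\gamma-2)$, so Lemma~\ref{lemma:matula1} applies and produces a $(k+1)$-connected subgraph, contradicting our standing assumption on $G$.

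In writing this up I would proceed in the order: (i) note $\gamma \ge \frac{5}{2}$ by hypothesis, so it suffices to rule out $\frac{5}{2} \le \gamma \le 3$; (ii) establish the polynomial inequality $\gamma^2 - \frac{11}{2}\gamma + \frac{15}{2} \le 0$ on $[\frac{5}{2}, 3]$ by factoring as $(\gamma-2)(\gamma - \frac{15}{4}) \le 0$; (iii) chain the two bounds to contradict Lemma~\ref{lemma:matula1}; (iv) conclude $\gamma > 3$. Strictly I only need the endpoint-type check, but exhibiting the factorization makes the argument transparent and leaves no routine residue.

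I do not anticipate a genuine obstacle here: this is a short bootstrapping step whose only subtlety is getting the direction of the inequalities right (the edge hypothesis is a \emph{lower} bound on $|E(G)|$, Matula's lemma supplies an \emph{upper} bound on the edge count of a graph with no dense subgraph, and these collide exactly when $\gamma$ is small). The one point to be careful about is that Lemma~\ref{lemma:matula1} requires $\gamma > 1$, which is automatic since $\gamma \ge \frac{5}{2}$.
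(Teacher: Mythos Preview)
Your approach is exactly the paper's: assume $\gamma\le 3$, combine the edge hypothesis with Lemma~\ref{lemma:matula1}, and reduce to a quadratic inequality in $\gamma$ on $[\tfrac{5}{2},3]$. There is, however, an arithmetic slip in your factorization. The quadratic $\gamma^2-\tfrac{11}{2}\gamma+\tfrac{15}{2}$ does \emph{not} factor as $(\gamma-2)(\gamma-\tfrac{15}{4})$; that product expands to $\gamma^2-\tfrac{23}{4}\gamma+\tfrac{15}{2}$. The correct roots are $\gamma=\tfrac{5}{2}$ and $\gamma=3$ (equivalently, $2\gamma^2-11\gamma+15=(2\gamma-5)(\gamma-3)$), so the quadratic is nonpositive precisely on $[\tfrac{5}{2},3]$, not on $[2,\tfrac{15}{4}]$. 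The conclusion survives---indeed it becomes tight at both endpoints, which is exactly what the paper observes by checking $g(\tfrac{5}{2})=g(3)=0$---but the factorization as written is false and should be corrected.
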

	\begin{proof}
		Suppose that $\gamma \leq 3$. Then, by Lemma~\ref{lemma:matula}$'$,
		\begin{equation}\label{eq:base}
			0 \,\leq\, \frac{1}{k^2}|E(G)| - \frac{19}{12}(\gamma-1) \,\leq\, \frac{1}{6}\left(\gamma^2+4\gamma-2\right) - \frac{19}{12}(\gamma-1) \,=\, \frac{1}{12}(2\gamma^2 - 11\gamma + 15).
		\end{equation}
The function $g(\gamma)=2\gamma^2 - 11\gamma + 15$ on the right-hand side of (\ref{eq:base}) is convex in $\gamma$. 
Hence it is maximized on the boundary of the interval $[\frac{5}{2}; 3]$. But it is easy to check that $g(\frac{5}{2})=g(3)=0$,
 hence it is nonpositive on the whole interval. Therefore, $\gamma > 3$.
	\end{proof}
	
		All the edges in $G$ either belong to the graph $G[S \cup B]$, or are incident 
to the vertices in $A$. The number of edges in $G[S \cup B]$ can be bounded either using Matula's lemma 
(which is efficient for $\beta \leq \frac{3}{2}$) or using the induction hypothesis 
(which can be applied if $\beta > \frac{3}{2}$). Hence the difficulty is  in bounding the number of edges incident to the vertices in $A$. 
	
	The first step is to show that $A$ cannot be too large, because otherwise we can use  induction.
	
	\begin{lemma}
		$\alpha < \frac{3}{2}$.
	\end{lemma}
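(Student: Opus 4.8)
The plan is to split $G$ along the separator $S$ into the two induced subgraphs $G[S\cup A]$ and $G[S\cup B]$, bound the number of edges in each by the induction hypothesis, and add the two bounds back together. First I would record the two elementary identities that drive everything: since $|A|+|B|+|S|=|V(G)|$ and $|S|=k$, we have $\alpha+\beta=\gamma-1$; and since $G[A]$ is a \emph{smallest} component of $G-S$ while $B$ is the union of all the other components, $\beta\geq\alpha$. (Note also that $G-S$ genuinely has at least two components: the alternative, $|V(G)\setminus S|\leq 1$, forces $\gamma\leq 1+\tfrac1k$, contradicting $\gamma>3$; so $A,B\neq\emptyset$ and $G[S\cup A]$, $G[S\cup B]$ are both proper subgraphs of $G$.)

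Now suppose toward a contradiction that $\alpha\geq\tfrac32$. Then $\beta\geq\alpha\geq\tfrac32$ as well, so $G[S\cup A]$ has $\tfrac1k|V|=1+\alpha\geq\tfrac52$ and $G[S\cup B]$ has $\tfrac1k|V|=1+\beta\geq\tfrac52$. Each is a proper subgraph of $G$ containing no $(k+1)$-connected subgraph, so the minimality of $G$ (i.e.\ the induction hypothesis of Theorem~\ref{theo:main}) applies to each, giving
\[
\tfrac{1}{k^2}|E(G[S\cup A])|\ \leq\ \tfrac{19}{12}\big((1+\alpha)-1\big)=\tfrac{19}{12}\alpha,
\qquad
\tfrac{1}{k^2}|E(G[S\cup B])|\ \leq\ \tfrac{19}{12}\beta.
\]
Since $A$ and $B$ lie in different components of $G-S$, there are no edges between them, so every edge of $G$ lies in $E(G[S\cup A])\cup E(G[S\cup B])$, the intersection of these two edge sets being exactly $E(G[S])$. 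Dropping the nonnegative term $|E(G[S])|$ yields $|E(G)|\leq|E(G[S\cup A])|+|E(G[S\cup B])|$, hence $\tfrac1{k^2}|E(G)|\leq\tfrac{19}{12}(\alpha+\beta)=\tfrac{19}{12}(\gamma-1)$, contradicting the standing assumption $\tfrac1{k^2}|E(G)|>\tfrac{19}{12}(\gamma-1)$. Therefore $\alpha<\tfrac32$.

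The only point that needs care — and it is not really an obstacle, just bookkeeping — is checking that the induction hypothesis is legitimately available for both halves: this requires that $G[S\cup A]$ and $G[S\cup B]$ each have normalized order at least $\tfrac52$ and strictly fewer vertices than $G$. The size lower bound is exactly what the contradiction hypothesis $\alpha\geq\tfrac32$ (together with $\beta\geq\alpha$) provides, and strict properness follows from $A,B\neq\emptyset$ as noted above. No quadratic/linear transition issue arises here, which is why this lemma is far easier than the ones that handle the regime $\beta\leq\tfrac32$.
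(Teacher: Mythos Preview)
Your proof is correct and follows exactly the same approach as the paper: assume $\alpha\geq\tfrac32$, apply the induction hypothesis to both $G[S\cup A]$ and $G[S\cup B]$, and add the bounds to obtain $\tfrac1{k^2}|E(G)|\leq\tfrac{19}{12}(\alpha+\beta)=\tfrac{19}{12}(\gamma-1)$. You have simply fleshed out the details (verifying $\beta\geq\alpha$, that both halves have normalized order at least $\tfrac52$, that they are proper subgraphs, and that $|E(G)|\leq|E(G[S\cup A])|+|E(G[S\cup B])|$) that the paper leaves implicit.
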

	\begin{proof}
		If $\alpha \geq \frac{3}{2}$, then we can apply the induction hypothesis both for $G[S \cup A]$ and for $G [S \cup B]$, and thus obtain
	$$
		\frac{1}{k^2}|E(G)| \,\leq\, \frac{19}{12} \alpha + \frac{19}{12} \beta \,=\, \frac{19}{12}(\alpha+\beta) \,=\, \frac{19}{12}(\gamma-1).
	$$
	\end{proof}
	
	The next lemma shows that $A$ cannot be too small either, 
since otherwise the total number of edges between the vertices in $A$ and the vertices in $S \cup A$ is small.
	
	\begin{lemma}\label{lemma:nottoosmall}
		$\alpha > 1$.
	\end{lemma}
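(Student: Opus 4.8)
The plan is to argue by contradiction. Suppose $\alpha \le 1$, i.e.\ $|A| \le k$. I would first record the structure I need. Since $G[A]$ is a connected component of $G - S$, no edge of $G$ joins $A$ to $B$, so every edge of $G$ is either incident to $A$ or lies in $G[S \cup B]$; moreover the edges incident to $A$ number at most $\binom{|A|}{2} + |A|\,k$, which after dividing by $k^2$ is at most $\frac{\alpha^2}{2} + \alpha$ (using $\binom{|A|}{2} \le |A|^2/2$). Also $B \ne \emptyset$, since otherwise $G - S = G[A]$ would be connected with $\alpha = \gamma - 1 > 2$, contradicting the already-established bound $\alpha < \frac{3}{2}$; and as $A$ is a smallest component of $G - S$, every component lying in $B$ has at least $|A|$ vertices, so $\beta \ge \alpha$. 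Finally, recall $\alpha + \beta = \gamma - 1 > 2$.

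The main step is to bound $|E(G[S \cup B])|$, and here I would split into two cases depending on whether the induction hypothesis applies to $G[S \cup B]$ (which has $\frac{1}{k}|V(G[S \cup B])| = 1 + \beta$ and strictly fewer vertices than $G$). If $\beta \ge \frac{3}{2}$, then $1 + \beta \ge \frac{5}{2}$, so the minimality of $G$ gives $\frac{1}{k^2}|E(G[S \cup B])| \le \frac{19}{12}\beta$. Combining with the edge count incident to $A$,
\[
\frac{1}{k^2}|E(G)| \;\le\; \frac{19}{12}\beta + \frac{\alpha^2}{2} + \alpha \;\le\; \frac{19}{12}\beta + \frac{19}{12}\alpha \;=\; \frac{19}{12}(\gamma-1),
\]
where the middle inequality holds because $0 < \alpha \le 1 < \frac{7}{6}$ implies $\frac{\alpha^2}{2} + \alpha \le \frac{19}{12}\alpha$. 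This contradicts the hypothesis $\frac{1}{k^2}|E(G)| > \frac{19}{12}(\gamma-1)$.

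If $\beta < \frac{3}{2}$ the induction hypothesis is unavailable, so instead I would apply the normalized Matula lemma (Lemma~\ref{lemma:matula1}) to $G[S \cup B]$ with $\gamma$ replaced by $1 + \beta$: since $G[S \cup B]$ has no $(k+1)$-connected subgraph, $\frac{1}{k^2}|E(G[S \cup B])| \le \frac{1}{6}\big((1+\beta)^2 + 4(1+\beta) - 2\big) = \frac{\beta^2}{6} + \beta + \frac{1}{2}$, and hence
\[
\frac{1}{k^2}|E(G)| \;\le\; \frac{\beta^2}{6} + \beta + \frac{1}{2} + \frac{\alpha^2}{2} + \alpha .
\]
It remains to check that the right-hand side is at most $\frac{19}{12}(\alpha+\beta)$ on the relevant range. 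The constraints $\alpha \le 1$, $\beta \le \frac{3}{2}$ and $\alpha + \beta \ge 2$ (the last from $\gamma > 3$) also force $\alpha \ge \frac{1}{2}$ and $\beta \ge 1$, and together they cut out the triangle with vertices $(\alpha, \beta) \in \{(1,1),\, (1, \tfrac{3}{2}),\, (\tfrac{1}{2}, \tfrac{3}{2})\}$. The function $f(\alpha, \beta) = \frac{19}{12}(\alpha+\beta) - \frac{\beta^2}{6} - \beta - \frac{1}{2} - \frac{\alpha^2}{2} - \alpha$ is concave (its Hessian is the diagonal matrix with entries $-1$ and $-\frac{1}{3}$), so it attains its minimum over the triangle at a vertex; a short computation gives $f(1,1) = 0$, $f(1, \tfrac{3}{2}) = \frac{1}{12}$, $f(\tfrac{1}{2}, \tfrac{3}{2}) = \frac{1}{6}$, all nonnegative, so $f \ge 0$ on the triangle and $\frac{1}{k^2}|E(G)| \le \frac{19}{12}(\gamma-1)$, again a contradiction. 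Hence $\alpha > 1$.

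I expect the case $\beta < \frac{3}{2}$ to be the only one carrying real content: once the induction hypothesis is available the bound is comfortable, whereas in the Matula case the inequality is tight at $(\alpha, \beta) = (1,1)$, where $f$ vanishes. This tightness is exactly why the lemma can only assert $\alpha > 1$, and it is also the reason it is convenient that the hypothesis on $|E(G)|$ is a strict inequality, so that proving the non-strict reverse inequality already suffices.
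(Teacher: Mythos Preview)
Your proof is correct and follows essentially the same approach as the paper: bound the edges incident to $A$ by $\tfrac{\alpha^2}{2}+\alpha$, then split into the case $\beta\ge\tfrac{3}{2}$ (induction on $G[S\cup B]$) and the case $\beta<\tfrac{3}{2}$ (Matula's bound on $G[S\cup B]$), and verify the resulting quadratic inequality. Your concavity/vertex check on the triangle is just a one-shot repackaging of the paper's two-step argument (reduce to $\beta=1$ by monotonicity, then check the endpoints in $\alpha$); the content is identical.
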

	\begin{proof}
		Suppose that $\alpha \leq 1$. Then $\beta > 1$, since $\alpha+\beta+1 = \gamma > 3$. 
If $\beta \geq \frac{3}{2}$, then using the induction hypothesis for $G[S \cup B]$, we get
$$
	\frac{1}{k^2}|E(G)| \,\leq\, \frac{1}{2}\alpha^2 + \alpha + \frac{19}{12}\beta \,\leq\, \frac{3}{2}\alpha + \frac{19}{12}\beta \,<\, 
\frac{19}{12}(\alpha+\beta) \,=\, \frac{19}{12}(\gamma-1). 
$$
Thus $\beta < \frac{3}{2}$. Therefore, $\alpha > \frac{1}{2}$. In this case, applying Lemma~\ref{lemma:matula}$'$ to $G[S \cup B]$ reduces the 
problem to proving the inequality
		$$
			\frac{1}{2}\alpha^2 + \alpha + \frac{1}{6}\left((\beta+1)^2+4(\beta+1)-2\right) \,\leq\, \frac{19}{12} (\alpha + \beta),
		$$
		which is equivalent to
		\begin{equation}\label{eq:aissmall}
			6\alpha^2 + 2\beta^2 -7\alpha - 7\beta + 6 \,\leq\, 0. 
		\end{equation}
		For $\alpha$ fixed, the left-hand side of (\ref{eq:aissmall}) is monotone decreasing in $\beta$ 
when $\beta < \frac{7}{4}$, so its maximum is attained at $\beta = 1$. Thus~\eqref{eq:aissmall} will hold if the function
$g_1(\alpha) =6\alpha^2  -7\alpha +1 $ is nonpositive.
Since $g_1(\alpha)$ is a convex function, 
	 its maximum on the interval $[\frac{1}{2}; 1]$ 
is attained at one of the boundary points. We have
		$$
			g_1\left(\frac{1}{2}\right)\,=\,6\cdot\left(\frac{1}{2}\right)^2  -7\cdot\frac{1}{2} +1\,=\,-1 \,<\, 0,
		\qquad
\mbox{	and}\qquad
			g_1(1)\,=\,6\cdot 1^2 - 7\cdot 1 + 1 \,=\, 0. \qedhere
		$$
	\end{proof}
	
	So we know that $1 < \alpha < \frac{3}{2}$. How can we bound the number of edges incident to the vertices in $A$?
The tricks of Lemma \ref{lemma:nottoosmall} 
  and of Lemma~\ref{lemma:matula1} are not sufficient here. The idea is to combine them by applying Lemma~\ref{lemma:matula1} only to the graph $G[A \cup (S \setminus S')]$, where $S'$ is a subset of $S$ with relatively few edges between  $A$ and $S'$. To obtain such set $S'$, 
we will use Lemma \ref{lemma:core} below, which asserts that there are many vertices in $S$ that have not too many neighbors in $A$.
	
	\begin{lemma}\label{lemma:core}
		Let $S_1 \coloneqq \{v \in S \,:\, \frac{1}{k}|N(v) \cap A| \leq \frac{1}{2}(\alpha+1)\}$. Then $\frac{1}{k}|S_1| > \frac{1}{3}$.
	\end{lemma}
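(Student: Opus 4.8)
The plan is to argue by contradiction: suppose $\tfrac1k|S_1|\le\tfrac13$ and set $S_2:=S\setminus S_1$, so that $\tfrac1k|S_2|\ge\tfrac23$ and every vertex of $S_2$ has more than $\tfrac k2(\alpha+1)$ neighbours in $A$. I will produce a $(k+1)$-connected subgraph of $G$ inside $G[A\cup S_2]$, contradicting the choice of $G$.

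First I bound from below the number $m_A$ of edges of $G$ incident to a vertex of $A$. Every edge of $G$ either meets $A$ or lies in $G[S\cup B]$, so $m_A=|E(G)|-|E(G[S\cup B])|$. Bounding $|E(G[S\cup B])|$ by the induction hypothesis when $\beta\ge\tfrac32$ and by Lemma~\ref{lemma:matula1} when $1<\beta<\tfrac32$ (legitimate because $\beta\ge\alpha>1$), and using $\tfrac1{k^2}|E(G)|>\tfrac{19}{12}(\alpha+\beta)$, a short computation — minimised over the admissible range $\beta\ge\alpha$ — gives $\tfrac1{k^2}m_A>q(\alpha)$ for an explicit quadratic $q$. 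I then transfer this to $H:=G[A\cup S_2]$: the only edges counted by $m_A$ that are absent from $H$ join $A$ to $S_1$, and there are at most $|S_1|\cdot\tfrac k2(\alpha+1)\le\tfrac{k^2}{6}(\alpha+1)$ of them by the definition of $S_1$; hence $\tfrac1{k^2}|E(H)|>q(\alpha)-\tfrac{\alpha+1}{6}$, while $|V(H)|\le(\alpha+1)k$. Since the bound of Lemma~\ref{lemma:matula1} increases with the vertex count, it is in fact advantageous to work with $G[A\cup S']$ for a subset $S'\subseteq S_2$ of size $tk$ (any $t\le\tfrac23$ is available), accounting for the at most $\alpha k$ edges to $A$ lost with each discarded vertex of $S_2$ and the more than $\tfrac k2(\alpha+1)$ retained with each vertex of $S'$, and choosing $t$ optimally. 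Lemma~\ref{lemma:matula1} applied to $G[A\cup S']$, on $(\alpha+t)k$ vertices, then yields a $(k+1)$-connected subgraph — the contradiction — as soon as the resulting lower bound on $\tfrac1{k^2}|E(G[A\cup S'])|$ exceeds $\tfrac16\bigl((\alpha+t)^2+4(\alpha+t)-2\bigr)$.

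The crux is this quadratic inequality on $1<\alpha<\tfrac32$, and I expect the bare edge count above not to close it by itself near $\alpha=1$ or in the regime $\beta<\tfrac32$ (where Matula's bound for $G[S\cup B]$ is weaker than the linear induction bound). It should be reinforced by the fact that $G$, being a counterexample with the fewest vertices and $\gamma>3$, satisfies $\delta(G)>\tfrac{19}{12}k$: since the neighbours of a vertex $a\in A$ all lie in $S\cup A$, this forces $|N(a)\cap A|>\tfrac7{12}k$ for every $a\in A$, and — under our hypothesis $|S_1|\le\tfrac k3$ — also a lower bound on $|N(a)\cap S_2|$, both of which feed extra edges into $G[A\cup S']$; near $\alpha=1$ one moreover uses that these estimates force $G[A]$ to be almost complete. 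The main obstacle is precisely the calibration — choosing $t$, combining the edge count with these degree bounds, and splitting into the cases $\beta\ge\tfrac32$ and $\beta<\tfrac32$ — so that the contradiction goes through uniformly for $\alpha\in(1,\tfrac32)$, the tight regime being $\alpha$ close to $1$.
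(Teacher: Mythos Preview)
Your proposal is a plan rather than a proof: you set up the contradiction and identify the relevant subgraph $G[A\cup S']$, but then explicitly leave the decisive inequality unverified (``I expect the bare edge count above not to close it by itself'', ``The main obstacle is precisely the calibration''). The concern is well founded. In the case $\beta\ge\tfrac32$ your bound gives $\tfrac1{k^2}m_A>\tfrac{19}{12}\alpha$, hence $\tfrac1{k^2}|E(G[A\cup S])|>\tfrac{19}{12}\alpha$, while Matula's threshold on $(\alpha+1)k$ vertices is $\tfrac16(\alpha^2+6\alpha+3)$; the former exceeds the latter only for $\alpha\in(\tfrac32,2)$, so the direct route fails on the entire interval $(1,\tfrac32)$. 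Shrinking to $S'\subseteq S_2$ of size $tk$ and adding the degree bound $|N(a)\cap A|>\tfrac{7}{12}k$ does not rescue this: at $\alpha=\tfrac32$, for instance, the resulting lower bound $\tfrac{7\alpha}{24}+\tfrac{t(\alpha+1)}{2}$ never reaches the Matula threshold for any $t$. Your reinforcements are suggestions, not arguments.

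The paper's proof is different in kind. Rather than trying to force a $(k+1)$-connected subgraph inside $G_1:=G[A\cup(S\setminus S_1)]$ by edge-counting, it exploits that $G_1$ is \emph{not} $(k+1)$-connected: take a separating set $T$ of size $k$ in $G_1$ with sides $X,Y$ chosen so that $|X\cap A|\ge|Y\cap A|$. The defining property of $S\setminus S_1$ then forces $Y\cap S=\varnothing$, since any $v\in Y\cap(S\setminus S_1)$ would satisfy $|N(v)\cap A|\le|A|-|X\cap A|\le\tfrac k2(\alpha+1)$, putting $v$ back in $S_1$. Thus $Y\subset A$ with $\tfrac1k|Y|\le\tfrac12\alpha$, so $G-Y$ still has at least $\tfrac52 k$ vertices and the induction hypothesis applies to \emph{it}; comparing $|E(G)|$ with $|E(G-Y)|$ yields $\mu:=\tfrac1k|Y|>\tfrac76-2\sigma$. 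A further case split on whether $X$ meets $S$ (if not, $S\setminus S_1\subseteq T$ and $T\cap A$ is a cut of $G[A]$ of normalized size $\sigma$) then gives the contradiction through explicit quadratic estimates in $\alpha,\beta,\sigma$.

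The idea you are missing is this structural use of non-$(k+1)$-connectivity. You invoke the hypothesis ``$G$ has no $(k+1)$-connected subgraph'' only as the target of a Matula-style numerical contradiction; the paper instead uses it to locate a small removable piece $Y\subset A$ and then applies induction to $G-Y$ rather than to $G[S\cup B]$.
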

	\begin{proof}
		Suppose that $\frac{1}{k}|S_1| = \sigma \leq \frac{1}{3}$. Let $G_1 \coloneqq G[A \cup (S \setminus S_1)]$. Since $G_1$ is not $(k+1)$-connected, it has a separating set $T \subset V(G_1)$ of size $k$. Let $X$ and $Y$ form a partition of $V(G_1) \setminus T$ and be separated by $T$ in $G_1$. Without loss of generality assume that $|X \cap A| \geq |Y \cap A|$. Then
		$$\frac{1}{k}|X\cap A| \,\geq\, \frac{1}{2}\cdot \frac{1}{k}|A \setminus T| \,\geq\, \frac{1}{2}(\alpha - 1).$$
		Hence if $v \in Y \cap S$, then
		$$\frac{1}{k}|N(v)\cap A|\,\leq\,\frac{1}{k}\left(|A| - |X \cap A|\right) \,\leq\, \frac{1}{2}(\alpha+1),$$
		which means that $v \in S_1$. But that is impossible, since $S_1 \cap V(G_1) = \emptyset$. Thus $Y \cap S = \emptyset$, i.e. $Y \subset A$. In particular, since $|X \cap A| \geq |Y \cap A| = |Y|$, we have $\frac{1}{k}|Y| \leq \frac{1}{2}\alpha$. Then
		$$\frac{1}{k}|V(G) \setminus Y| \,=\, \alpha+\beta+1 - \frac{1}{k}|Y| \,\geq\, \frac{1}{2}\alpha + \beta + 1 \,\geq\, \frac{5}{2},$$
		so the induction hypothesis holds for $G - Y$, and
		$$\frac{1}{k^2}|E(G-Y)| \,\leq\, \frac{19}{12} \left(\frac{1}{k}|V(G-Y)|-1\right).$$
		Hence we are done if
		$$\frac{1}{k^2}(|E(G)| - |E(G - Y)|) \,\leq\, \frac{19}{12} \cdot \frac{1}{k}|Y|,$$
		so assume that that is not the case. Let $\mu \coloneqq \frac{1}{k}|Y|$. Then
		$$\frac{1}{2}\mu^2 + \mu (1 + \sigma) \,>\, \frac{19}{12} \mu,$$
so
		$$\mu \,>\, \frac{7}{6} - 2\sigma.$$
	
	We consider  two cases.
	
	{\sc Case 1: $X \cap S \neq \emptyset$}. Let $v \in X \cap S$. Then $v$ has more than $k\cdot \frac{1}{2}(\alpha+1)$ neighbors in $A$, none of which belong to $Y$. Hence $\mu < \frac{1}{2}(\alpha-1)$, and so
	$$\frac{1}{2} (\alpha - 1) \,>\, \frac{7}{6} - 2\sigma.$$
	Therefore,
	$$\alpha \,>\, \frac{10}{3} - 4\sigma \,\geq\, \frac{10}{3} - 4\cdot\frac{1}{3}\,=\, 2;$$
	a contradiction.
	
	{\sc Case 2: $X \cap S = \emptyset$}. Then $S \setminus S_1 \subset T$, and the set $T \cap A$ separates $X$ and $Y$ in $G[A]$ and satisfies $\frac{1}{k}|T \cap A| = \frac{1}{k}(|T| - |T \cap S|) = 1 - (1-\sigma) = \sigma$. Note that since $|Y| \leq |X|$, we have
	$$\frac{7}{6} - 2\sigma \,<\, \mu \,\leq\, \frac{1}{2}(\alpha - \sigma),$$
	so
	$$\sigma \,>\, \frac{7}{9} - \frac{1}{3}\alpha \,>\, \frac{7}{9} - \frac{1}{3} \cdot \frac{3}{2} \,=\, \frac{5}{18}.$$
	
	Now observe that
	$$
		\frac{1}{k^2}|E(G[A])| \,\leq\, \frac{1}{2} \alpha^2 - \mu (\alpha - \sigma - \mu).
	$$
	Since $\frac{7}{6} - 2\sigma < \mu \leq \frac{1}{2}(\alpha-\sigma)$, the latter expression is less than
	$$
		\frac{1}{2} \alpha^2 - \left(\frac{7}{6} - 2\sigma\right)\cdot\left(\alpha + \sigma - \frac{7}{6}\right).
	$$
	Hence $\frac{1}{k^2}(|E(G)| - |E(G - A)|)$ is less than
	$$
		\frac{1}{2}(\alpha + 1) \sigma + \alpha (1 - \sigma) + \frac{1}{2} \alpha^2 - \left(\frac{7}{6} - 2\sigma\right)\cdot\left(\alpha + \sigma - \frac{7}{6}\right).
	$$
	
	{\sc Case 2.1. $\beta \leq \frac{3}{2}$.} Then, after adding Matula's estimate for the number of edges in $G[S \cup B]$ and subtracting $\frac{19}{12}(\alpha + \beta)$, it is enough to prove that the following quantity is nonpositive:
	\begin{align*}
		&\frac{1}{2}(\alpha + 1) \sigma + \alpha (1 - \sigma) + \frac{1}{2} \alpha^2 - \left(\frac{7}{6} - 2\sigma\right)\cdot\left(\alpha + \sigma - \frac{7}{6}\right) \\
		+ &\frac{1}{6}\left((\beta+1)^2+4(\beta+1)-2\right) - \frac{19}{12}(\alpha +\beta),
	\end{align*}
	which in equal to
	\begin{align*}
		\frac{1}{36}(18 \alpha^2+54 \alpha \sigma -63 \alpha+6 \beta^2-21 \beta+72 \sigma^2-108 \sigma+67).
	\end{align*}
	Note that for $\alpha$ and $\sigma$ fixed, the last expression is monotone decreasing in $\beta$ (recall that $\beta \leq \frac{3}{2}$, while the minimum is attained at $\beta = \frac{7}{4}$), so its maximum is attained when $\beta = \alpha$, where it turns into
	$$
		\varphi_1(\alpha,\sigma)\,=\,\frac{1}{36} (24 \alpha^2+54 \alpha \sigma-84 \alpha+72 \sigma^2-108 \sigma+67).
	$$
Since $\varphi_1(\alpha,\sigma)$  is convex in both $\alpha$ and $\sigma$, it attains its maximum at some point $(\alpha_0,\sigma_0)$, where
 $\alpha_0 \in \{1, \frac{3}{2}\}$ and $\sigma_0 \in \{\frac{5}{18}, \frac{1}{3}\}$. It remains to check the four possibilities:
 $$\varphi_1\left(1,\frac{5}{18}\right)\,=\,-\frac{11}{162} \,<\, 0,\quad \varphi_1\left(1,\frac{1}{3}\right)\,=\,-\frac{1}{12} \,<\, 0,$$
 $$\varphi_1\left(\frac{3}{2},\frac{5}{18}\right)\,=\,-\frac{125}{648} \,<\, 0,\quad\mbox{and}\quad \varphi_1\left(\frac{3}{2},\frac{1}{3}\right)\,=\,-\frac{1}{6} \,<\, 0.$$	
	
	{\sc Case 2.2. $\beta > \frac{3}{2}$.} Then, instead of using Matula's bound for $G[S \cup B]$, we can apply the induction hypothesis, so it is enough to prove that
	the function
	$$
	\varphi_2(\alpha,\sigma)\,=\,	\frac{1}{2}(\alpha + 1) \sigma + \alpha (1 - \sigma) + \frac{1}{2} \alpha^2 - \left(\frac{7}{6} - 2\sigma\right)\cdot\left(\alpha + \sigma - \frac{7}{6}\right) - \frac{19}{12}\alpha 
	$$
is nonpositive.
	Again, we only have to check the boundary values:
 $$\varphi_2\left(1,\frac{5}{18}\right)\,=\,-\frac{49}{324} \,<\, 0,\quad \varphi_2\left(1,\frac{1}{3}\right)\,=\,-\frac{1}{6} \,<\, 0,$$
 $$\varphi_2\left(\frac{3}{2},\frac{5}{18}\right)\,=\,-\frac{125}{648} \,<\, 0,\quad\mbox{and}\quad \varphi_2\left(\frac{3}{2},\frac{1}{3}\right)\,=\,-\frac{1}{6} \,<\, 0.$$
	This finishes the proof.
	\end{proof}
	
	Now we can simply try to  use as the set $S'$ the set $S_1$ itself. This choice indeed gives a good bound if $A$ is  large, as the next lemma shows.
	
	\begin{lemma}\label{lemma:smaller}
		$\alpha < \frac{4}{3}$.
	\end{lemma}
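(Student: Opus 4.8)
The plan is to argue by contradiction: suppose $\alpha \geq \frac{4}{3}$, so that, together with the bound $\alpha < \frac{3}{2}$ already established, $\alpha$ ranges over $[\frac{4}{3}, \frac{3}{2})$. Set $\sigma \coloneqq \frac{1}{k}|S_1|$; by Lemma~\ref{lemma:core} we have $\frac{1}{3} < \sigma \leq 1$. Following the strategy described before the lemma, I take $S' = S_1$ and split the edges of $G$ incident to $A$ into two groups: those lying inside $G_1 \coloneqq G[A \cup (S \setminus S_1)]$, and those joining $A$ to $S_1$. Since $G_1$ is a subgraph of $G$ it has no $(k+1)$-connected subgraph, and $\frac{1}{k}|V(G_1)| = \alpha + 1 - \sigma > 1$, so Lemma~\ref{lemma:matula1} applies to $G_1$; meanwhile, by the definition of $S_1$ each vertex of $S_1$ sends at most $\frac{1}{2}(\alpha+1)k$ edges to $A$. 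Recalling that $A$ has no neighbours in $B$, combining these two bounds gives
$$\frac{1}{k^2}\bigl(|E(G)| - |E(G[S\cup B])|\bigr) \;\leq\; \frac{1}{6}\bigl((\alpha+1-\sigma)^2 + 4(\alpha+1-\sigma) - 2\bigr) + \frac{1}{2}(\alpha+1)\sigma.$$

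The remaining term $\frac{1}{k^2}|E(G[S\cup B])|$ is handled, as in the earlier lemmas, by a case split on $\beta$. If $\beta \geq \frac{3}{2}$, the induction hypothesis applied to $G[S \cup B]$ (which has $\frac{1}{k}|V| = \beta+1 \geq \frac{5}{2}$) gives $\frac{1}{k^2}|E(G[S\cup B])| \leq \frac{19}{12}\beta$, and it remains to verify
$$\frac{1}{6}\bigl((\alpha+1-\sigma)^2 + 4(\alpha+1-\sigma) - 2\bigr) + \frac{1}{2}(\alpha+1)\sigma \;\leq\; \frac{19}{12}\,\alpha$$
over the rectangle $(\alpha,\sigma) \in [\frac{4}{3},\frac{3}{2}] \times [\frac{1}{3},1]$. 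If instead $\beta < \frac{3}{2}$, I would bound $\frac{1}{k^2}|E(G[S\cup B])|$ by Lemma~\ref{lemma:matula1} applied to $G[S\cup B]$, obtaining an inequality that also involves $\beta$; its left-hand side is decreasing in $\beta$ for $\beta < \frac{7}{4}$, so since $\beta \geq \alpha$ (recall $A$ is a smallest component) and $\alpha \leq \frac{3}{2}$, it suffices to treat the worst case $\beta = \alpha$, which again leaves an inequality in $\alpha$ and $\sigma$ only, on the same rectangle.

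Both resulting inequalities have the form $P(\alpha,\sigma) \leq 0$ with $P$ a quadratic polynomial; in each case the Hessian of $P$ is positive semidefinite (the $\alpha\sigma$-coefficient is small compared with the pure-square coefficients), so $P$ is convex and attains its maximum on the rectangle at one of the four corners $(\alpha,\sigma) \in \{\frac{4}{3},\frac{3}{2}\}\times\{\frac{1}{3},1\}$. Evaluating $P$ at those four points in each case finishes the proof. The main obstacle is that these inequalities are essentially tight — several corners yield the value $0$ or a very small negative number — so the three ingredients (Matula's bound on $E(G_1)$, the bound $\frac{1}{2}(\alpha+1)\sigma$ on the $A$--$S_1$ edges, and the bound on $E(G[S\cup B])$) must all be used at full strength, and one has to pin down the direction of monotonicity in $\beta$ correctly, so that replacing $\beta$ by its lower bound $\alpha$ is a legitimate step rather than the opposite.
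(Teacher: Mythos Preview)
Your proposal is correct and follows essentially the same approach as the paper's own proof: take $S' = S_1$, apply Lemma~\ref{lemma:matula1} to $G[A \cup (S\setminus S_1)]$ together with the degree bound $\frac{1}{2}(\alpha+1)$ on $A$--$S_1$ edges, then split into the cases $\beta \le \frac{3}{2}$ (Matula on $G[S\cup B]$, reduce to $\beta=\alpha$) and $\beta > \frac{3}{2}$ (induction on $G[S\cup B]$), and finish by convexity and a check of the four corners of $[\frac{4}{3},\frac{3}{2}]\times[\frac{1}{3},1]$. The only minor inaccuracy is your remark that ``several corners yield the value $0$'': in fact all eight corner evaluations come out strictly negative (the smallest in absolute value being $-\frac{1}{27}$), so the inequalities are close to tight but not exactly so.
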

	\begin{proof}
		Suppose that $\alpha \geq \frac{4}{3}$. Recall that $\sigma = |S_1| > \frac{1}{3}$. Using Lemma~\ref{lemma:matula1} for $G[A \cup (S \setminus S_1)]$, we get that
		\begin{align*}
			&\frac{1}{k^2}(|E(G)| - |E(G - A)|) \,\\
			\leq\, &\frac{1}{6}\left((\alpha + 1 - \sigma)^2+4(\alpha + 1 - \sigma)-2\right) + \frac{1}{2}(\alpha+1)\sigma\,\\
			=\,&\frac{1}{6} (\alpha^2+\alpha \sigma+6 \alpha+\sigma^2-3 \sigma+3).
		\end{align*}
		
		{\sc Case 1: $\beta \leq \frac{3}{2}$.} Adding Matula's estimate for $G[B \cup S]$ and subtracting $\frac{19}{12}(\alpha+\beta)$, we get 
		\begin{align*}
			\frac{1}{k^2}|E(G)|&-\frac{19}{12}(\alpha+\beta)\,\\
			\leq\, &\frac{1}{6} (\alpha^2+\alpha \sigma+6 \alpha+\sigma^2-3 \sigma+3) + \frac{1}{6}\left((\beta+1)^2+4(\beta+1)-2\right) - \frac{19}{12}(\alpha+\beta) \\
			=\,& \frac{1}{12}(2 \alpha^2+2 \alpha \sigma-7 \alpha+2 \beta^2-7 \beta+2 \sigma^2-6 \sigma+12).
		\end{align*}
		Again, the maximum is attained when $\beta = \alpha$, so we should consider the expression
		$$
		\varphi_3(\alpha,\sigma)\,=\,\frac{1}{6} (2 \alpha^2+\alpha \sigma-7 \alpha+\sigma^2-3 \sigma+6).
		$$
It is convex in both $\alpha$ and $\sigma$, so again it is enough to check the boundary points:
$$\varphi_3\left(\frac{4}{3},\frac{1}{3}\right)\,=\,-\frac{1}{27} < 0,\quad \varphi_3\left(\frac{4}{3},1\right)\,=\,-\frac{2}{27} < 0,$$
$$\varphi_3\left(\frac{3}{2},\frac{1}{3}\right)\,=\,-\frac{7}{108} < 0,\quad\mbox{and}\quad \varphi_3\left(\frac{3}{2},1\right)\,=\,-\frac{1}{12} < 0.$$	
		
		{\sc Case 2: $\beta > \frac{3}{2}$.} Then, instead of using Matula's bound for $G[S \cup B]$, we can apply the induction hypothesis, so it is enough to prove that
the function
	$$
\varphi_4(\alpha,\sigma)\,=\,\frac{1}{6} (\alpha^2+\alpha \sigma+6 \alpha+\sigma^2-3 \sigma+3) - \frac{19}{12}\alpha \,=\, \frac{1}{12} (2 \alpha^2+2 \alpha \sigma-7 \alpha+2 \sigma^2-6 \sigma+6) 
		$$
is nonpositive. 
		The function is convex in both $\alpha$ and $\sigma$, so we check the boundary points:
$$\varphi_4\left(\frac{4}{3},\frac{1}{3}\right)\,=\,-\frac{1}{18} < 0,\quad \varphi_4\left(\frac{4}{3},1\right)\,=\,-\frac{5}{54} < 0,$$
$$\varphi_4\left(\frac{3}{2},\frac{1}{3}\right)\,=\,-\frac{7}{108} < 0,\quad\mbox{and}\quad \varphi_4\left(\frac{3}{2},1\right)\,=\,-\frac{1}{12} < 0.$$				
		This finishes the proof.
	\end{proof}
	
	The next lemma is the final piece of the jigsaw. It shows that if $A$ is small, we can still obtain the desired bound if we take the set $S'$ to be slightly bigger than $S_1$.
	
	\begin{lemma}\label{lemma:greater}
		$\alpha > \frac{4}{3}$.
	\end{lemma}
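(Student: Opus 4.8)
The plan is to assume for contradiction that $\alpha \le \frac43$. Together with Lemma~\ref{lemma:nottoosmall} and Lemma~\ref{lemma:core} we then have $1 < \alpha \le \frac43$ and $\sigma := \frac1k|S_1| > \frac13$, and Lemma~\ref{lemma:smaller} will supply the contradiction, so it suffices to rule out this range of $\alpha$. As the remark preceding the lemma explains, the estimate used in Lemma~\ref{lemma:smaller} --- apply Lemma~\ref{lemma:matula1} to $G[A\cup(S\setminus S_1)]$ and bound the normalized number of neighbours in $A$ of each vertex of $S_1$ by $\frac12(\alpha+1)$ --- is, for $\alpha$ close to $\frac43$, insufficient only by a hair. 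The remedy is to delete from $S$ a set $S'$ strictly larger than $S_1$ and apply Lemma~\ref{lemma:matula1} to the smaller graph $G[A\cup(S\setminus S')]$; for this to help, the vertices of $S'\setminus S_1$ must still have a controlled number of neighbours in $A$.

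The first step is to construct such an $S'$. I would run an argument parallel to the proof of Lemma~\ref{lemma:core}: for a suitable threshold $\tau$ with $\frac12(\alpha+1) < \tau < \alpha$ (available precisely because $\alpha$ is small, so this interval is long), set $S' := \{v\in S\,:\,\frac1k|N(v)\cap A|\le\tau\}$, write $\sigma':=\frac1k|S'|$, and, assuming $\sigma'$ is small, pass to a separating set $T$ of $G[A\cup(S\setminus S')]$ with sides $X,Y$ where $|X\cap A|\ge|Y\cap A|$. Exactly as in Lemma~\ref{lemma:core} this forces $Y\subseteq A$ and $\mu:=\frac1k|Y|\le\frac12\alpha$; one then either finishes via the induction hypothesis applied to $G-Y$ (legitimate since $\frac1k|V(G)\setminus Y|\ge\frac12\alpha+\beta+1\ge\frac52$) or obtains an inequality of the shape $\mu > c - 2\sigma'$ for an explicit constant $c$, and splits on whether $X\cap S=\emptyset$. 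When $X\cap S=\emptyset$ one also learns that $G[A]$ has a separator of small normalized size, hence $\frac1{k^2}|E(G[A])|\le\frac12\alpha^2-\mu(\alpha-\sigma-\mu)$, which feeds back as an improvement; when $X\cap S\ne\emptyset$, any $v\in X\cap S$ has more than $\tau k$ neighbours in $A$, all lying in $X\cap A$, which bounds $\mu$ from above and, combined with the lower bound on $\mu$, forces $\sigma'$ to be large --- which is exactly what we wanted. Either way this yields a usable $S'$, together with the degree bound $\tau$ on $S'\setminus S_1$.

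With $S'$ in hand, the finish mirrors the rest of the paper. Bound the number of edges of $G$ incident to $A$ by Lemma~\ref{lemma:matula1} applied to $G[A\cup(S\setminus S')]$ plus $\frac12(\alpha+1)\sigma+\tau(\sigma'-\sigma)$ (or, where it applies, by the structural bound on $|E(G[A])|$ from the previous paragraph plus a crude estimate on the remaining $A$--$S$ edges), add the appropriate estimate for $G[S\cup B]$ --- Matula's lemma if $\beta\le\frac32$, the induction hypothesis if $\beta>\frac32$ --- subtract $\frac{19}{12}(\alpha+\beta)$, use monotonicity in $\beta$ to reduce the first case to $\beta=\alpha$, and reduce to checking that finitely many functions of $\alpha,\sigma,\sigma',\tau,\mu$, each convex in every variable, are nonpositive on the relevant box, which is done by evaluating at the corners, exactly as in Lemmas~\ref{lemma:core} and~\ref{lemma:smaller}. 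Since no value $1<\alpha\le\frac43$ survives, $\alpha>\frac43$; together with Lemma~\ref{lemma:smaller} this is a contradiction, and Theorem~\ref{theo:main} follows.

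The main obstacle is not any single inequality but the case analysis needed to produce an $S'$ that provably improves the bound: the Lemma~\ref{lemma:core}-type dichotomy must be re-run with the new threshold $\tau$, and in the branch where one deletes a piece $Y$ of $A$ and recurses, one must verify a multivariable inequality over a genuinely higher-dimensional parameter box --- all while arranging the formulation so that every auxiliary function stays convex in each variable and corner-checking remains legitimate. Near $\alpha=\frac43$, $\sigma=\frac13$ the numbers are tight enough that no estimate along the way can afford to be wasteful.
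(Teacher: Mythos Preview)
Your plan is plausible in outline but over-engineered, and you defer the delicate numerics to an unverified corner check over a box in four or five parameters $(\alpha,\sigma,\sigma',\tau,\mu)$. The paper's argument is much simpler and avoids re-running Lemma~\ref{lemma:core} with a new threshold altogether.

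The key simplification you miss is that the extra vertices of $S'$ beyond $S_1$ need no improved degree bound --- the trivial bound $\alpha$ suffices, provided $|S'|$ is chosen judiciously. Concretely, take $S'\subseteq S$ with $\frac{1}{k}|S'| = 3-2\alpha$ (this lies in $[\frac13,1)$ since $1<\alpha\le\frac43$) and with $\frac{1}{k}|S'\cap S_1|\ge\frac13$ (possible because $\sigma>\frac13$ by Lemma~\ref{lemma:core}). The normalized number of $A$--$S'$ edges is then at most $\alpha(3-2\alpha)-\frac16(\alpha-1)$, since each of the at least $k/3$ vertices of $S'\cap S_1$ misses at least $\frac12(\alpha-1)k$ vertices of $A$. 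Applying Lemma~\ref{lemma:matula1} to $G[A\cup(S\setminus S')]$, whose normalized size is exactly $3\alpha-2$, and adding this $A$--$S'$ estimate yields
\[
\frac{1}{k^2}\bigl(|E(G)|-|E(G-A)|\bigr)\ \le\ \frac16\bigl(-3\alpha^2+17\alpha-5\bigr),
\]
a function of $\alpha$ alone. The two cases for $G[S\cup B]$ (Matula if $\beta\le\frac32$, induction if $\beta>\frac32$) then each collapse to a single-variable inequality --- the first becomes $-\frac13(\alpha-1)^2\le0$ after reducing to $\beta=\alpha$, the second is a quadratic in $\alpha$ with negative discriminant --- so there are no corners to check at all.

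In short: rather than introducing a threshold $\tau$ and re-deriving a variant of Lemma~\ref{lemma:core} to control the degrees on all of $S'$, the paper fixes the \emph{size} of $S'$ as an explicit function of $\alpha$, uses only the $k/3$ vertices of $S'$ guaranteed to lie in $S_1$ for the saving, and lets the remainder of $S'$ carry the trivial bound. That collapses your multivariable analysis to a one-variable identity.
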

	\begin{proof}
		Suppose that $\alpha \leq \frac{4}{3}$. Then $1 - 2(\alpha-1) \geq \frac{1}{3}$. Let $S'$ be a subset of $S$ with $\frac{1}{k}|S'| = 1 - 2(\alpha-1)$ such that $\frac{1}{k}|S' \cap S_1| \geq \frac{1}{3}$. Observe that the normalized number of edges between $A$ and $S'$ is at most
		$$
			\frac{1}{k^2} |A|\cdot |S'| - \frac{1}{2}(\alpha - 1) \cdot \frac{1}{3}, 
		$$
		by the definition of $S_1$. Hence, using Lemma~\ref{lemma:matula1} for $G[A \cup (S \setminus S')]$, we get that
		\begin{align*}
			\frac{1}{k^2}(|E(G)| - |E(G - A)|) \,&\leq\, 
			\frac{1}{6}\left((3\alpha-2)^2+4(3\alpha-2)-2\right) + \alpha (3-2\alpha) - \frac{1}{6}(\alpha-1)\,\\
			&=\,\frac{1}{6} (-3 \alpha^2+17 \alpha-5).
		\end{align*}
		
		{\sc Case 1: $\beta \leq \frac{3}{2}$.} Adding Matula's estimate for $G[B \cup S]$ and subtracting $\frac{19}{12}(\alpha+\beta)$, we get 
		\begin{align*}
		\frac{1}{k^2}|E(G)|-\frac{19}{12}(\alpha+\beta)&\leq	\frac{1}{6} (-3 \alpha^2+17 \alpha-5) + \frac{1}{6}\left((\beta+1)^2+4(\beta+1)-2\right) - \frac{19}{12}(\alpha+\beta)\,\\
			&=\,\frac{1}{12} (-6 \alpha^2+15 \alpha+2 \beta^2-7 \beta-4).
		\end{align*}
		Since $\alpha \leq \beta \leq \frac{3}{2}$, the maximum is attained when $\beta = \alpha$, in which case the last expression turns into $-\frac{1}{3}(\alpha - 1)^2 \leq 0$.
		
		{\sc Case 2: $\beta > \frac{3}{2}$.} Then, instead of using Matula's bound for $G[S \cup B]$, we can apply the induction hypothesis, so it is enough to prove that
\begin{equation}\label{a131}
			\frac{1}{6} (-3 \alpha^2+17 \alpha-5) - \frac{19}{12}\alpha \,=\, -\frac{1}{12} (6 \alpha^2-15\alpha+10) \,\leq\,0.
\end{equation}
	Since the discriminant of the quadratic $6 \alpha^2-15\alpha+10$ is negative,~\eqref{a131} 
  holds for all $\alpha$, and we are done.
	\end{proof}
	
	Since Lemmas \ref{lemma:smaller} and \ref{lemma:greater}  contradict each other, we conclude that such graph $G$ does not exist. This  completes the proof of the theorem.

\end{document}